\newtheorem{theorem}{Theorem}
\newtheorem{lemma}{Lemma}[section]
\newtheorem{proposition}[lemma]{Proposition}
\newtheorem*{conjecture1}{Conjecture}
\theoremstyle{definition}
\newtheorem{definition}[lemma]{Definition}
\newtheorem{remark}[lemma]{Remark}
\numberwithin{equation}{section}
\begin{document}

\title{Coxeter Systems for which the Brink-Howlett automaton is minimal}

%    Information for first author
\author{James Parkinson}
%    Address of record for the research reported here
\address{School of Mathematics and Statistics, University of Sydney, NSW, Australia}
\email{jamesp@maths.usyd.edu.au}
%    \thanks will become a 1st page footnote.
%\thanks{The first author was supported in part by NSF Grant \#000000.}

%    Information for second author
\author{Yeeka Yau}
\address{School of Mathematics and Statistics, University of Sydney, NSW, Australia}
\email{y.yau@maths.usyd.edu.au}
%\thanks{Support information for the second author.}

%    General info
%\subjclass[2000]{Primary 54C40, 14E20; Secondary 46E25, 20C20}

\date{November 2018}

% \dedicatory{This paper is dedicated to our advisors.}

%\keywords{Differential geometry, algebraic geometry}

\begin{abstract}
In this article we classify the Coxeter systems for which the Brink-Howlett automaton is minimal. We show that this automaton is minimal if and only if each elementary root is supported on a standard spherical subsystem, thereby resolving a conjecture of Hohlweg, Nadeau and Williams.
\end{abstract}

\maketitle

\section*{Introduction}
In their celebrated 1993 paper~\cite{Brink_Howlett}, Brink and Howlett proved that all finitely generated Coxeter systems $(W,S)$ are automatic. In particular, they constructed a finite state automaton, which we denote by $\mathcal{A}_{BH}$, recognising the language of reduced words of $(W,S)$. The construction of this automaton utilises the standard geometric representation of Coxeter groups via the associated root system $\Phi$, and the key property discovered by Brink and Howlett is the finiteness of the set $\mathscr{E}$ of \textit{elementary roots}.
\par
In certain cases the automaton $\mathcal{A}_{BH}$ is known to be minimal, in the sense that it has the fewest states amongst all automata recognising the language of reduced words of $(W,S)$. For example Eriksson \cite{Eriksson} and Headley \cite{Headley} independently proved that $\mathcal{A}_{BH}$ is minimal if $W$ is an affine Coxeter group of type $\tilde{A}_n$. However in general $\mathcal{A}_{BH}$ is not minimal, with the simplest example being type $\tilde{C}_2$ in which case the automaton $\mathcal{A}_{BH}$ has one superfluous state.
\par
Recently, Hohlweg, Nadeau and Williams \cite{Hohlweg} introduced a combinatorial approach to constructing an automaton recognising the language of reduced words of $(W,S)$ using the weak order on $(W,S)$ and the notion of a Garside shadow, building on the work of Dehornoy, Dyer and Hohlweg \cite{Dehornoy} and Dyer and Hohlweg \cite{Dyer}. In \cite{Hohlweg}, relations between the Garside shadow automaton $\mathcal{A}_G$ and the Brink-Howlett automaton $\mathcal{A}_{BH}$ are investigated and the following conjecture \cite[Conjecture 2]{Hohlweg} is stated:

\begin{conjecture1} \label{the_conjecture}
The Brink-Howlett automaton $\mathcal{A}_{BH}$ is minimal if and only if $\mathscr{E} = \Phi_{\mathrm{sph}}^+$, where $\Phi_{\mathrm{sph}}^+$ is the set of roots supported on a standard spherical subsystem.
\end{conjecture1}

In \cite{Hohlweg} this conjecture was shown to hold in the following cases:
\begin{enumerate}
    \item when $W$ is finite,
    \item when $W$ is right-angled,
    \item when the Coxeter graph $\Gamma$ is a complete graph,
    \item when $W$ is of type $\tilde{A}_n$,
    \item when $W$ has rank 3.
\end{enumerate}

In this paper we prove the above conjecture. Moreover, we provide a classification of  the Coxeter systems for which $\mathcal{A}_{BH}$ is minimal in terms of excluded subgraphs of the Coxeter graph. Let $\mathscr{X}$ denote the set of connected Coxeter graphs which are either of affine or compact hyperbolic type and contain neither circuits nor infinite bonds. Specifically $\mathscr{X}$ consists of the Coxeter graphs of the irreducible affine Coxeter groups other than type $\tilde{A}_n$, along with the graphs
\medskip

\begin{tikzpicture}[scale=1]
    \node at (0.8,0) {$X_3(a,b)$:}; 
    \draw
    (2,0) node[fill=black,circle,scale=0.6] (0) {}
    (3,0) node[fill=black,circle,scale=0.6] (1)
    {}
    (4,0) node[fill=black,circle,scale=0.6] (2)
    {};
    \draw (0)--node [midway,above]{$a$}(1);
    \draw (1)--node [midway,above]{$b$}(2);
 \node at (7,0) {$X_4(c)$:};
    \draw
    (8,0) node[fill=black,circle,scale=0.6] (0) {}
    (9,0) node[fill=black,circle,scale=0.6] (1)
    {}
    (10,0) node[fill=black,circle,scale=0.6] (2)
    {}
    (11,0) node[fill=black,circle,scale=0.6] (3)
    {};
    \draw (0)--node [midway,above]{$c$}(1);
    \draw (1)--(2);
    \draw (2)--node [midway,above]{$5$}(3);
 \node at (1,-1.5) {$X_5(d)$:};
    \draw
    (2,-1.5) node[fill=black,circle,scale=0.6] (0) {}
    (3,-1.5) node[fill=black,circle,scale=0.6] (1)
    {}
    (4,-1.5) node[fill=black,circle,scale=0.6] (2)
    {}
    (5,-1.5) node[fill=black,circle,scale=0.6] (3)
    {}
    (6,-1.5) node[fill=black,circle,scale=0.6] (4)
    {};
    \draw (0)--node [midway,above]{$d$}(1);
    \draw (1)--(2);
    \draw (2)--(3);
    \draw (3)--node [midway,above]{$5$}(4);
\node at (7.25,-1.5) {$Y_4$:};
    \draw
    (8,-1.5) node[fill=black,circle,scale=0.6] (0) {}
    (9,-1.5) node[fill=black,circle,scale=0.6] (1)
    {}
    (10,-1.5) node[fill=black,circle,scale=0.6] (2)
    {}
    (11,-1.5) node[fill=black,circle,scale=0.6] (3)
    {};
    \draw (0)--(1);
    \draw (1)--node [midway,above]{$5$}(2);
    \draw (2)--(3);
\node at (1.2,-3) {$Z_4$:};
    \draw
    (2,-3) node[fill=black,circle,scale=0.6] (0) {}
    (3,-3) node[fill=black,circle,scale=0.6] (1)
    {}
    (4,-2.5) node[fill=black,circle,scale=0.6] (2)
    {}
    (4,-3.5) node[fill=black,circle,scale=0.6] (3)
    {};
    \draw (0)--node [midway,above]{$5$}(1);
    \draw (1)--(2);
    \draw (1)--(3);
\node at (7.25,-3) {$Z_5$:};
    \draw
    (8,-3) node[fill=black,circle,scale=0.6] (0) {}
    (9,-3) node[fill=black,circle,scale=0.6] (1)
    {}
    (10,-3) node[fill=black,circle,scale=0.6] (2)
    {}
    (11,-2.5) node[fill=black,circle,scale=0.6] (3)
    {}
    (11,-3.5) node[fill=black,circle,scale=0.6] (4)
    {};
    \draw (0)--node [midway,above]{$5$}(1);
    \draw (1)--(2);
    \draw (2)--(3);
    \draw (2)--(4);
\end{tikzpicture}
\medskip

\noindent where $a,b<\infty$ with $\frac{1}{a}+\frac{1}{b}<\frac{1}{2}$, $c\in\{4,5\}$, and $d\in\{3,4,5\}$. Note that if $a\leq b$ then either $(a,b)\in\{(4,5),(5,5)\}$, or $3\leq a<\infty$ and $6\leq b<\infty$ with $(a,b)\neq (3,6)$. 

The main theorem of this paper is the following.

\begin{theorem} \label{Theorem 1}
Let $(W,S)$ be a finitely generated Coxeter system. The following are equivalent:
\begin{enumerate}
    \item[(1)] The Brink-Howlett automaton $\mathcal{A}_{BH}$ is minimal.
    \item[(2)] The Coxeter graph of $(W,S)$ does not have a subgraph contained in~$\mathscr{X}$.
    \item[(3)] The set of elementary roots is $\mathscr{E} = \Phi^{+}_{\mathrm{sph}}$.
\end{enumerate}
\end{theorem}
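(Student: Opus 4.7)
The natural route is to prove the cycle $(3) \Rightarrow (1) \Rightarrow (2) \Rightarrow (3)$, treating $(3) \Leftrightarrow (2)$ as the combinatorial core and bracketing it between the two automata-theoretic implications. First I would recall that the inclusion $\Phi^+_{\mathrm{sph}} \subseteq \mathscr{E}$ is automatic, so condition $(3)$ amounts to the reverse inclusion, and that the states of $\mathcal{A}_{BH}$ can be described as the reachable ``cone types'' built from elementary roots (equivalently, from certain sets $N(w) \cap \mathscr{E}$). This description lets me compare states by producing distinguishing reduced words.

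For $(3) \Rightarrow (1)$, I would argue that under the hypothesis $\mathscr{E} = \Phi^+_{\mathrm{sph}}$ any two distinct states of $\mathcal{A}_{BH}$ are separated: the state attached to an element $w$ is determined by the reflection subgroup generated by the elementary roots it sees, and when every elementary root is supported on a spherical standard subsystem, one can read off the set from a long enough reduced suffix using the finiteness of the spherical parabolic subgroups. This is essentially the content of the ``easy direction'' in \cite{Hohlweg}, extended to general type.

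The bulk of the work is the equivalence $(2) \Leftrightarrow (3)$. For $(3) \Rightarrow (2)$ I would proceed by contrapositive: exhibit, for each graph $\Gamma \in \mathscr{X}$, an explicit elementary root whose support is the whole graph. Since the graphs in $\mathscr{X}$ are either of affine or compact hyperbolic type they are not spherical, so such a root lies in $\mathscr{E} \setminus \Phi^+_{\mathrm{sph}}$; pulling this back through an embedded subgraph violates $(3)$. The construction of these witnesses can be done uniformly using the dominance order and the standard generators: for $X_3(a,b)$ and its one-step extensions one chases the action of $s_1s_2\cdots$ on simple roots, while for $Y_4$, $Z_4$, $Z_5$ and the excluded affine diagrams one verifies elementarity directly on a carefully chosen root using Brink-Howlett's dot-product criterion ($\langle \alpha, \beta\rangle \in [-1,1)$ along the orbit). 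For $(2) \Rightarrow (3)$ I would argue by induction on rank: any elementary root $\alpha$ can be replaced by a root in a rank-$\leq 5$ parabolic subsystem using the classification of non-spherical minimal infinite subsystems; if the support of $\alpha$ is not spherical, an analysis of the remaining possibilities (guided by the Coxeter graph avoiding $\mathscr{X}$) shows the support must contain an element of $\mathscr{X}$ as a subgraph, contradicting $(2)$.

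Finally, for $(1) \Rightarrow (2)$, again by contrapositive, suppose $\Gamma$ contains a subgraph $\Gamma' \in \mathscr{X}$. Using the elementary root $\alpha$ constructed in the previous paragraph for $\Gamma'$, I would produce two elements $w_1, w_2 \in W$ whose associated Brink-Howlett states differ only by the presence of a root dominated by $\alpha$, and verify that no word distinguishes them in the language of reduced expressions; this collapse reduces the state count, so $\mathcal{A}_{BH}$ is not minimal. The expected main obstacle is the uniform construction of non-spherical elementary roots on each graph in $\mathscr{X}$ together with the case analysis in $(2) \Rightarrow (3)$, since the forbidden list is heterogeneous (mixing affine and compact hyperbolic diagrams with edge labels in $\{3,4,5\}$ and the free parameters $a,b$ in $X_3$), and one must handle the ranks $3, 4, 5$ by somewhat different geometric arguments before the induction takes over.
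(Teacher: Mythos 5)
Your skeleton matches the paper's: the same cycle of implications, contrapositive for $(1)\Rightarrow(2)$ via witnesses on each graph in $\mathscr{X}$, a graph-classification argument for $(2)\Rightarrow(3)$, and an appeal to \cite{Hohlweg} for $(3)\Rightarrow(1)$. But there is a genuine gap at the heart of $(1)\Rightarrow(2)$: you say you would ``produce two elements $w_1,w_2$ whose Brink--Howlett states differ only by the presence of a root dominated by $\alpha$, and verify that no word distinguishes them,'' but that verification \emph{is} the theorem's main new content, and you give no mechanism for it. The paper isolates this as a key lemma: choose $J$ spherical and $t\in S$ with $J\cup\{t\}$ not spherical and $w_J(\alpha_t)\in\mathscr{E}$; then $\mathscr{E}(w_J)=\Phi_J^+$ and $\mathscr{E}(tw_J)=\Phi_J^+\cup\{w_J(\alpha_t)\}$ are distinct states, while $T(w_J)=T(tw_J)$ because any $v\in T(w_J)\setminus T(tw_J)$ would force $t\in D(w_Jv)$, hence $J\cup\{t\}\subseteq D(w_Jv)$ (as $w_J$ is the longest element of $W_J$), contradicting the fact that descent sets generate finite parabolic subgroups. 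Without identifying specific elements of this shape and this descent-set argument (or a substitute), ``no word distinguishes them'' is an assertion, not a proof. Note also that the witness you need is not an arbitrary elementary root of full support on $\Gamma'$ but the specific root $w_J(\alpha_t)$, whose elementarity must be checked case by case ($-\alpha_t+\delta$ in the affine cases; the inner-product bound $\langle\alpha_t,\alpha\rangle>-1$ for $\alpha\in\Phi_J^+$ in the compact hyperbolic cases).

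There is a second gap in $(2)\Rightarrow(3)$: you conclude that a non-spherical support must contain an element of $\mathscr{X}$, but the minimal non-spherical connected Coxeter graphs also include circuits (type $\tilde A_n$) and infinite bonds, neither of which lies in $\mathscr{X}$. For example, a root supported on a triangle $\tilde A_2$ has non-spherical support yet its support contains no subgraph from $\mathscr{X}$, so your claimed dichotomy fails as stated. The missing ingredient is Brink's result that the support of an elementary root contains neither a circuit nor an infinite bond; only after invoking it can one restrict to tree supports with finite labels and run the classification (the paper does this by a direct case analysis on the maximal edge label, showing any such non-spherical tree contains a member of $\mathscr{X}$). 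Your proposed ``induction on rank'' replacing $\alpha$ by a root in a low-rank parabolic is also not needed and would be hard to justify: once Brink's lemma is in place, the argument is purely about the graph $\Gamma(J(\alpha))$ and the root itself plays no further role.
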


\section{Preliminaries}

In this section we recall some standard facts regarding Coxeter systems $(W,S)$ from \cite{Combinatorics} and \cite{Humphreys}. We also recall the construction of crystallographic root systems and some general facts regarding finite state automata from \cite{Word_Processing} and \cite{holt_rees}.

\subsection{Coxeter systems and root systems} \label{coxeter_systems_and_root_systems}

Let $(W,S)$ be a Coxeter system with $|S| < \infty$. Let $m(s,t)$ denote the order of $st$ for $s,t \in S$, and let $\Gamma=\Gamma(S)$ denote the Coxeter graph of $(W,S)$. For $J\subseteq S$ let $W_J$ be the standard parabolic subgroup generated by $J$. We say that $J$ is \textit{spherical} if $|W_J| < \infty$. Let $\Gamma(J)$ be the subgraph of $\Gamma$ with vertex set $J$. Thus $\Gamma(J)$ is the Coxeter graph of the standard parabolic Coxeter system $(W_J,J)$. 

The \textit{length} of $w\in W$ is 
$$
\ell(w)=\min\{n\geq 0\mid w=s_1\cdots s_n\text{ with }s_1,\ldots,s_n\in S\},
$$
and an expression $w=s_1\cdots s_n$ with $n=\ell(w)$ is called a \textit{reduced expression} for~$w$. For $J \subseteq S$ spherical, let $w_J$ denote the unique longest element of $W_J$.

The \textit{(left) descent set} of $w\in W$ is 
$$
D(w)=\{s\in S\mid \ell(sw)<\ell(w)\}.
$$ 

Let $V$ be an $\mathbb{R}$-vector space with basis $\{\alpha_s\mid s\in S\}$. Define a symmetric bilinear form on $V$ by linearly extending $\langle\alpha_s,\alpha_t\rangle=-\cos(\pi/m(s,t))$. The Coxeter group $W$ acts on $V$ by the rule $s(v)=v-2\langle v,\alpha_s\rangle \alpha_s$ for $s\in S$ and $v\in V$, and the \textit{root system} of $W$ is $\Phi=\{w(\alpha_s)\mid w\in W,\,s\in S\}$. The elements of $\Phi$ are called \textit{roots}, and the \textit{simple roots} are the roots $\alpha_s$ with $s\in S$. 

Each root $\alpha\in\Phi$ can be written as $\alpha=\sum_{s\in S}c_s\alpha_s$ with either $c_s\geq 0$ for all $s\in S$, or $c_s\leq 0$ for all $s\in S$. In the first case $\alpha$ is called \textit{positive} (written $\alpha>0$), and in the second case $\alpha$ is called \textit{negative} (written $\alpha<0$). Let $\Phi^+$ be the set of all positive roots. The \textit{support} of $\alpha \in \Phi$ is the set $J(\alpha)=\{s\in S\mid c_s\neq 0\}$. For $J \subseteq S$ define $\Phi_J^{+} \subseteq \Phi^{+}$ to be the set of positive roots $\alpha$ with $J(\alpha) = J$. Moreover, we write $\Gamma(\alpha)=\Gamma(J(\alpha))$ for the associated subgraph of $\Gamma$. Let 
$$
\Phi_{\mathrm{sph}}^+=\{\alpha\in \Phi^+\mid J(\alpha)\text{ is spherical}\}.
$$

The \textit{inversion set} of $w\in W$ is 
$$
\Phi(w)=\{\alpha\in \Phi^+\mid w(\alpha)<0\}.
$$

A root $\alpha\in\Phi^+$ is said to \textit{dominate} a root $\beta\in \Phi^+$ if $w(\alpha)<0$ implies that $w(\beta)<0$ (for all $w\in W$). A root $\alpha\in \Phi^+$ is said to be \textit{elementary} if $\alpha$ dominates no other positive root $\beta\neq \alpha$. We note that these roots are also called \textit{small}, \textit{humble} or \textit{minimal} in the literature. 

Let $\mathscr{E}\subseteq \Phi^+$ denote the set of all elementary roots. By \cite[Proposition 2.2(i)]{Brink} we have $\Phi_{\mathrm{sph}}^+\subseteq \mathscr{E}$. The key result of \cite{Brink_Howlett} is that $\mathscr{E}$ is a finite set for all finitely generated Coxeter systems $(W,S)$. 
The \textit{elementary inversion set} of $w\in W$ is 
$$
\mathscr{E}(w)=\{\alpha\in\mathscr{E}\mid w(\alpha)<0\}=\Phi(w)\cap \mathscr{E}.
$$

We recall the following result of Brink \cite{Brink}.

\begin{lemma}{\cite[Lemma 4.1]{Brink}} \label{no_circuits_no_infinitebonds}
Let $\alpha \in \Phi^+$ be such that $\Gamma(\alpha)$ contains a circuit or an infinite bond. Then $\alpha \notin \mathscr{E}$.
\end{lemma}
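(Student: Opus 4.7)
The plan is to prove the contrapositive: if $\Gamma(\alpha)$ contains an infinite bond or a circuit, then $\alpha$ is not elementary, i.e.\ $\alpha$ dominates some positive root $\beta \neq \alpha$. I would use the standard characterisation of dominance in terms of the bilinear form on $V$: for $\alpha, \beta \in \Phi^+$, $\alpha$ dominates $\beta$ if and only if $\langle \alpha, \beta \rangle \geq 1$. Since the statement only involves $\Gamma(\alpha)$, I would first pass to the standard parabolic subsystem $(W_{J(\alpha)}, J(\alpha))$ and reduce to the case $J(\alpha) = S$; the bilinear form and the $W_{J(\alpha)}$-action on the span of $\{\alpha_s : s \in J(\alpha)\}$ are the restrictions of those on $V$, so dominance in the parabolic subsystem transfers to dominance in $(W,S)$.

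For the infinite bond case, suppose $s, t \in S$ with $m(s,t) = \infty$, so $\langle \alpha_s, \alpha_t \rangle = -1$. Write $\alpha = c_s \alpha_s + c_t \alpha_t + \sum_{u \neq s,t} c_u \alpha_u$ with all $c_u \geq 0$ and $c_s, c_t > 0$. Inside the infinite dihedral subsystem generated by $\{s,t\}$, there are positive roots $\gamma_n = n\alpha_s + (n-1)\alpha_t$ (and their $s \leftrightarrow t$ swaps) with unbounded coefficients. A direct expansion of $\langle \alpha, \gamma_n \rangle$ shows that the contribution from the two-dimensional part grows linearly in $n$, while the residual contribution from the other simple roots (which have non-positive inner product with $\alpha_s$ and $\alpha_t$) is uniformly controlled; for $n$ sufficiently large this forces $\langle \alpha, \gamma_n \rangle \geq 1$, and one can arrange $\gamma_n \neq \alpha$ since the rank-two subsystem is infinite.

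For the circuit case, suppose $\Gamma$ contains a cycle on $s_1, s_2, \ldots, s_k$. No finite irreducible Coxeter group has a cycle in its Coxeter graph, so the parabolic on the cycle is infinite and admits infinitely many positive roots. I would construct $\beta$ by iterating the Coxeter element $c = s_1 s_2 \cdots s_k$ on a suitably chosen simple root $\alpha_{s_j}$, obtaining positive roots $\beta^{(n)} = c^n(\alpha_{s_j})$ whose coefficients grow without bound as one travels around the cycle. For $n$ large, $\beta = \beta^{(n)}$ satisfies $\beta \in \Phi^+$, $\beta \neq \alpha$, and $\langle \alpha, \beta \rangle \geq 1$, the last inequality again following from an estimate in which a growing principal contribution outpaces the bounded negative contribution from cross-terms.

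The main obstacle is the circuit case: although the intuition that coefficients grow is clear, the explicit combinatorial bookkeeping of root coefficients under successive reflections around the cycle is delicate, and one must verify positivity of each iterate and the dominance inequality simultaneously. This is the step that would require the most care in a detailed write-up, and it is handled by a careful combinatorial argument along the cycle in Brink's original proof in \cite{Brink}.
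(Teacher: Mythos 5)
The paper offers no proof of this lemma: it is quoted verbatim from Brink \cite[Lemma~4.1]{Brink}, so the only thing to compare your sketch against is the citation. Judged on its own, your sketch contains a genuine error at its core. First, a smaller issue: the criterion ``$\alpha$ dominates $\beta$ iff $\langle\alpha,\beta\rangle\ge 1$'' cannot be right as stated, since the right-hand side is symmetric in $\alpha,\beta$ while dominance is not (e.g.\ in the infinite dihedral group $\langle 2\alpha_s+\alpha_t,\alpha_s\rangle=1$, and $2\alpha_s+\alpha_t$ dominates $\alpha_s$ but not conversely). What is true is that $\langle\alpha,\beta\rangle\ge1$ forces one of the two to dominate the other, and one then needs an extra observation (e.g.\ $s_\beta(\alpha)>0$ rules out $\beta$ dominating $\alpha$) to pin down the direction. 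That part is fixable.

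The real gap is the growth estimate. For an infinite bond $\langle\alpha_s,\alpha_t\rangle=-1$, so the form is degenerate on $\mathrm{span}(\alpha_s,\alpha_t)$ and a direct computation gives $\langle c_s\alpha_s+c_t\alpha_t,\ n\alpha_s+(n-1)\alpha_t\rangle=c_s-c_t$ for every $n$: the ``two-dimensional part'' is \emph{constant}, not linearly growing, while the residual terms $\sum_{u\ne s,t}c_u\bigl(n\langle\alpha_u,\alpha_s\rangle+(n-1)\langle\alpha_u,\alpha_t\rangle\bigr)$ are $\le 0$ and weakly decreasing in $n$. Hence $\langle\alpha,\gamma_n\rangle\le c_s-c_t$ for all $n$, and taking $n$ large makes things worse, not better; if $c_s=c_t$ no $\gamma_n$ or $\delta_n$ works at all. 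The circuit case fails for the same reason in its most basic instance: a circuit with all labels $3$ spans a subsystem of type $\tilde A_{k-1}$, on which the form is positive semidefinite with all roots of norm $1$, so Cauchy--Schwarz gives $\langle\alpha,\beta\rangle\le 1$ for every $\beta$ supported on the cycle --- iterating a Coxeter element grows the coefficients of $\beta^{(n)}$ but leaves $\langle\alpha,\beta^{(n)}\rangle$ bounded. In both cases the ``growing principal contribution outpaces the bounded negative contribution'' mechanism simply does not exist, and when $J(\alpha)$ strictly contains the bond or circuit the non-positive cross-terms can push every such inner product below $1$, so one cannot in general exhibit a dominated root supported inside the bond or circuit. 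Brink's actual argument is not of this form; it runs by induction on depth using the recursive generation of elementary roots (every non-simple elementary root is $s\beta$ with $\beta$ elementary of smaller depth and $-1<\langle\beta,\alpha_s\rangle<0$), which is why the paper defers to \cite{Brink} rather than reproving the lemma.
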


\subsection{Crystallographic affine root systems} \label{affine_root_system} In the case of affine Coxeter groups there is a useful explicit construction of the root system and the elementary roots.
\par
This construction starts with a reduced, irreducible, \textit{crystallographic} root system $\Phi_0$ in a Euclidean vector space $V_0$ with positive definite inner product $\langle\cdot,\cdot\rangle$ (see, for example, \cite{Bourbaki} or \cite[\S3.3]{Lehrer}). Note that the crystallographic condition allows for roots of different lengths, and hence this is a slight modification of the general setup outlined in \Cref{coxeter_systems_and_root_systems}. The notions of dominance and elementary roots extend verbatim to this setting. 
\par
Let $\{ \alpha_1,\ldots,\alpha_n \}$ be a set of simple roots of $\Phi_0$ (when a choice of indexing is required we will use the Bourbaki conventions~\cite{Bourbaki}). For $\alpha\in\Phi_0$ let $s_{\alpha}$ be the reflection $s_{\alpha}(v)=v-\langle v,\alpha^{\vee}\rangle\alpha$, where $\alpha^{\vee}=2\alpha/\langle\alpha,\alpha\rangle$. Let $(W_0,S_0)$ be the Coxeter system with $S_0=\{s_i\mid 1\leq i\leq n\}$, where we set $s_i=s_{\alpha_i}$.
\par
Now define $ V:= V_0 \oplus \mathbb{R}\delta$. The bilinear form $\langle \cdot, \cdot \rangle$ extends uniquely to a symmetric positive semidefinite bilinear form on $V$ with radical $\mathbb{R}\delta$ by 
\begin{equation*}
    \langle \alpha + l \delta , \beta + k\delta \rangle = \langle \alpha, \beta \rangle \quad\text{for $\alpha,\beta\in V_0$ and $l,k\in\mathbb{R}$}.
\end{equation*}
The affine root system is $\Phi=\Phi_0+\mathbb{Z}\delta$. In particular, the set of positive affine roots is 
\begin{equation*}
\Phi^+=(\Phi_0^+ + \mathbb{Z}_{\geq 0}\delta)\cup(-\Phi_0^+ + \mathbb{Z}_{>0}\delta).
\end{equation*} 
Let $W$ be the subgroup of $GL(V)$ generated by $\{ s_{\alpha + k \delta} \mid \alpha + k \delta \in \Phi \}$. Note that
\begin{equation*}
    s_{\alpha + l \delta} (\beta + k \delta) = s_{\alpha}(\beta) + (l -k\langle \beta, \alpha^{\vee} \rangle) \delta.
\end{equation*}
Define $\alpha_0 = - \varphi + \delta$ where $\varphi$ is the highest root of $\Phi_0$ (see \cite{Lehrer}). Then $(W,S)$ is an affine Coxeter system with $S = \{ s_i \mid 0 \le i \le n \}$ where $s_0 := s_{\alpha_0}$. 
\par
In terms of the construction of the crystallographic root system described here, it is easy to see that the set of elementary roots is
\begin{equation*}
\mathscr{E}=(\Phi_0^+)\cup (-\Phi_0^+ + \delta).
\end{equation*}

\subsection{Automata}

Recall the following definition of automata from~\cite{Word_Processing}.

\begin{definition}
An \textit{automaton} $\mathcal{A}$ is a quintuple $(X, S, \mu, Y, o)$, where $X$ is a set, called the \textit{state set}, $S$ is a finite set called the \textit{alphabet}, $\mu: X \times S \rightarrow X$ is a function, called the \textit{transition function}, $Y \subseteq X $ is the set of \textit{accept} states, and $o$ is the \textit{initial} state. A \textit{finite state automaton} is an automaton $\mathcal{A}=(X,S,\mu,Y,o)$ with $|X|<\infty$.
\end{definition}

Since we are interested in automata recognising the language of reduced words of a Coxeter system, for the automata described in this article every state is an accept state (hence $X=Y$) and the alphabet is the set of Coxeter generators $S$. We view an automaton $\mathcal{A}$ as a directed graph with edges labelled by elements of $S$. The set of vertices of this graph is the set $X$ of states, and there is a directed edge labelled~$s$ from vertex $x$ to vertex $y$ if and only if $\mu(x,s) = y$. A word  $w = s_1 s_2 \cdots s_n$ is reduced if and only if $(s_1, s_2 , \ldots , s_n)$ corresponds to a sequence of directed edges labeled $s_i$ for $1 \le i \le n$ in the automaton $\mathcal{A}$, beginning from the initial state $o$.
\par
The automaton $\mathcal{A}_{BH}$ constructed in \cite{Brink_Howlett} is defined using elementary inversion sets of $W$. Let $\mathscr{E}(W)$ be the set of all elementary inversion sets. Note that $\mathscr{E}(W)$ is a finite set because the set $\mathscr{E}$ is finite (see \cite[Theorem 2.8]{Brink_Howlett}). The automaton $\mathcal{A}_{BH}$ is defined in the following way:

\begin{enumerate}
    \item The set of states is $\mathscr{E}(W)$ with initial state $\mathscr{E}(e)$.
    \item The transition function $\mu: \mathscr{E}(W) \times S \rightarrow \mathscr{E}(W)$ is defined by $\mathscr{E}(w) \xrightarrow{s} \mathscr{E}(ws)$ if $\alpha_s \notin \mathscr{E}(w)$.
\end{enumerate}

See \cite{Brink_Howlett} for complete details regarding the Brink-Howlett automaton and proof of the finiteness of the elementary roots. We note that in fact Brink and Howlett construct an automaton recognising the language of lexicographically minimal reduced words, however the construction given above is implicit in their paper. An exposition is also in \cite[Chapter 4]{Combinatorics}.
\par
We now construct the unique minimal automaton recognising the language of reduced words in $(W,S)$, following \cite[p16 and \S3.2]{Word_Processing}. Note that this construction applies to all finitely generated groups $(G,S)$, however we will focus on the case of Coxeter groups here. 

The \textit{cone type} of $w\in W$ is 
$$
T(w)=\{v\in W\mid \ell(wv)=\ell(w)+\ell(v)\}.
$$
Let $T(W)=\{T(w)\mid w\in W\}$ denote the set of all cone types. Let $${\mathcal{A}_0=(T(W),S,\mu_0,T(W),T(e))}$$ 
where $\mu_0(T,s)=T'$ if and only if there exists $w\in W$ such that $T=T(w)$ and $T'=T(ws)$ with $\ell(ws)=\ell(w)+1$. It is well known, and easy to check, that $\mathcal{A}_0$ an automaton recognising the language of reduced words of $(W,S)$. Moreover $\mathcal{A}_0$ is the unique minimal such automaton in the following sense.

\begin{proposition}[Myhill-Nerode] \label{myhill_nerode} Let $\mathcal{A}=(X,S,\mu,X,o)$ be an automaton recognising the language of reduced words of $(W,S)$. Then there is a unique surjective map $\theta:X\to T(W)$ such that if $\mu(x,s)=y$ then $\mu_0(\theta(x),s)=\theta(y)$. 
\end{proposition}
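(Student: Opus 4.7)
The plan is to use the classical Myhill-Nerode correspondence: two reduced words lead to the same state of $\mathcal{A}_0$ precisely when they admit the same set of reduced continuations, and any automaton recognising reduced words must respect this equivalence. So the map $\theta$ is essentially forced by the intertwining condition, and the substantive work is verifying well-definedness.

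First I would define $\theta$ on states reachable from $o$. If $x = \mu(o, s_1 \cdots s_k)$ then the word $u = s_1 \cdots s_k$ labels a path from $o$ in $\mathcal{A}$; since $\mathcal{A}$ accepts exactly the reduced words and every state is accepting, $u$ is automatically a reduced word. I would set $\theta(x) = T(u)$. The intertwining condition forces this formula by induction on path length, starting from the forced value $\theta(o) = T(e)$, so uniqueness of $\theta$ on the reachable part of $X$ is automatic.

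The main step is well-definedness. Suppose $u$ and $u'$ are reduced words both labelling paths in $\mathcal{A}$ from $o$ to the same state $x$. For any word $v$, the path from $o$ labelled $uv$ exists iff the path from $x$ labelled $v$ exists iff the path from $o$ labelled $u'v$ exists. Since paths from $o$ exist precisely for reduced words, this shows $uv$ is reduced iff $u'v$ is reduced, for every word $v$. Choosing a reduced expression for each $v_0 \in W$ and unpacking the definition of the cone type, we get $v_0 \in T(u) \iff v_0 \in T(u')$, hence $T(u) = T(u')$, so $\theta(x)$ is well-defined. Surjectivity is then immediate by taking a reduced expression of any $w \in W$, and the intertwining is direct: if $\mu(x,s) = y$ with $x$ reached by reduced $u$, then $us$ labels a path from $o$, hence is reduced, so $\theta(y) = T(us) = \mu_0(T(u),s) = \mu_0(\theta(x),s)$.

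The only mild technicality is with states of $X$ not reachable from $o$; such states do not affect the language accepted by $\mathcal{A}$, and the proposition is naturally read as asserting uniqueness of $\theta$ on the reachable subautomaton. I do not expect a serious obstacle beyond the well-definedness verification above, which is really just the Myhill-Nerode observation that the set of reduced continuations of $w$ determines $T(w)$.
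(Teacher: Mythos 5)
Your argument is correct: it is the standard Myhill--Nerode argument, which is exactly what the paper relies on, since its ``proof'' consists solely of a citation to \cite[Theorem 1.2.9]{Word_Processing}. Your well-definedness step (two reduced words reaching the same state have the same set of reduced continuations, and the set of reduced continuations of $w$ determines $T(w)$) is the crux, and your caveats about unreachable states and the forced value $\theta(o)=T(e)$ correctly identify the (harmless) imprecisions in the paper's statement rather than gaps in your proof.
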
 

\begin{proof}
See \cite[Theorem 1.2.9]{Word_Processing} and the proof thereof.
\end{proof}

In particular, note that the map $\theta$ from the set of states of $\mathcal{A}_{BH}$ to the set of states of $\mathcal{A}_0$ is given by
$$
\theta(\mathscr{E}(w))=T(w).
$$
Moreover, note that the finiteness of the set $\mathscr{E}(W)$ implies that the set $T(W)$ of cone types is finite. Furthermore, since minimality of $\mathcal{A}_{BH}$ is equivalent to injectivity of~$\theta$, we have that $\mathcal{A}_{BH}$ is minimal if and only if $T(w)=T(v)$ whenever $\mathscr{E}(w)=\mathscr{E}(v)$.

\section{Proof of \cref{Theorem 1}}

The proof of \Cref{Theorem 1} relies on the following key lemma, which gives a particular condition under which the Brink-Howlett automaton is not minimal.

\begin{lemma} \label{key_lemma}
Let $(W,S)$ be a finitely generated Coxeter system. If there exists $ J \subset S$ and $t \in S$ such that:
\begin{enumerate}
    \item[(i)] $J$ is spherical, and
    \item[(ii)] $J \cup \{ t \}$ is not spherical, and
    \item[(iii)] $w_J(\alpha_t) \in \mathscr{E}$,
\end{enumerate}
then the automaton $\mathcal{A}_{BH}$ is not minimal.
\end{lemma}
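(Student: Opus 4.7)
My plan is to apply the Myhill--Nerode proposition: since the map $\theta \colon \mathscr{E}(w) \mapsto T(w)$ is always a well-defined surjection from Brink--Howlett states onto cone types, minimality of $\mathcal{A}_{BH}$ is equivalent to the injectivity of $\theta$. To establish non-minimality, I will exhibit two elements with different elementary inversion sets but equal cone types.

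I will take $w_1 = w_J$ and $w_2 = tw_J$. Since $t \notin J$ one has $\alpha_t \notin \Phi_J^+ = \Phi(w_J)$, so $tw_J$ is reduced of length $\ell(w_J)+1$, and the inversion-set formula gives $\Phi(tw_J) = \Phi_J^+ \sqcup \{\beta\}$ with $\beta = w_J(\alpha_t)$. Using $\Phi_J^+ \subseteq \mathscr{E}$ from the cited result of Brink together with hypothesis~(iii), this gives $\mathscr{E}(w_1) = \Phi_J^+$ and $\mathscr{E}(w_2) = \Phi_J^+ \cup \{\beta\}$; these sets differ because $\beta$ has $\alpha_t$ in its support while $t \notin J$.

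To compare cone types, I will use the general equivalence $v \in T(w) \Leftrightarrow \mathscr{E}(w) \cap \Phi(v^{-1}) = \emptyset$. This follows from the fact that $\Phi(v^{-1})$ and $\Phi(w)$ are both downward closed in the dominance partial order: any minimal element of the finite intersection $\Phi(v^{-1}) \cap \Phi(w)$ must be elementary, since otherwise it would strictly dominate another root which, by downward closure, would also lie in the intersection, contradicting minimality. Granting this, the containment $T(w_2) \subseteq T(w_1)$ is immediate, and the reverse reduces to proving the implication
\[
\beta \in \Phi(v^{-1}) \;\Longrightarrow\; \Phi_J^+ \cap \Phi(v^{-1}) \neq \emptyset .
\]

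The heart of the proof --- and the main obstacle --- is this last implication. Setting $u = v^{-1}$, I argue by contradiction: assume $u\beta < 0$ yet $u$ inverts no root of $\Phi_J^+$. Since $w_J$ sends each simple root $\alpha_s$ with $s \in J$ to $-\alpha_{\sigma(s)}$ for the opposition involution $\sigma$ of $W_J$, we compute $(uw_J)(\alpha_s) = -u(\alpha_{\sigma(s)}) < 0$, so $s \in D_R(uw_J)$ for every $s \in J$; similarly $(uw_J)(\alpha_t) = u\beta < 0$ gives $t \in D_R(uw_J)$. Hence $J \cup \{t\} \subseteq D_R(uw_J)$. I then invoke the classical fact that $D_R(x)$ is spherical for every $x \in W$: for any $T \subseteq D_R(x)$, each $\alpha \in \Phi_T^+$ is a nonnegative combination of the simple roots $\{\alpha_s\}_{s\in T} \subseteq \Phi(x)$, so $x\alpha$ is a nonpositive combination of simple roots and hence a negative root, placing $\Phi_T^+$ inside the finite set $\Phi(x)$ and forcing $W_T$ to be finite. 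Applied to $x = uw_J$, this contradicts hypothesis~(ii), completing the proof.
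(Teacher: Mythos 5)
Your proof is correct, and its skeleton matches the paper's: the same two witnesses $w_J$ and $tw_J$, the same computation $\mathscr{E}(tw_J)=\mathscr{E}(w_J)\sqcup\{w_J(\alpha_t)\}$, and the same final contradiction that $J\cup\{t\}$ would lie in a descent set (your $D_R(v^{-1}w_J)$ is the left descent set of $w_Jv$, which is exactly what the paper uses), contradicting (ii) since descent sets are spherical. The one step you handle differently is the cone-type comparison: the paper proves $T(w_J)=T(tw_J)$ by direct length arithmetic ($\ell(tw_Jv)=\ell(tw_J)+\ell(v)$ forces $\ell(w_Jv)=\ell(w_J)+\ell(v)$, and a failure of the converse forces $t\in D(w_Jv)$), whereas you route through the characterization $v\in T(w)\Leftrightarrow \mathscr{E}(w)\cap\Phi(v^{-1})=\emptyset$, justified by taking dominance-minimal elements of $\Phi(w)\cap\Phi(v^{-1})$. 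That characterization is correct (it needs the standard facts that $\ell(wv)=\ell(w)+\ell(v)$ iff $\Phi(w)\cap\Phi(v^{-1})=\emptyset$, that inversion sets are closed downward under dominance, and that dominance is a partial order), but it is extra machinery: the paper's length argument reaches the same reverse inclusion with nothing beyond the exchange property. Your version does make transparent exactly which roots of $\mathscr{E}(tw_J)$ are ``redundant'' for determining the cone type, which is a conceptual bonus, at the cost of importing the dominance-order lemma.
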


\begin{proof}
Since $w_J$ is the longest element of $W_J$, we have $\mathscr{E}(w_J) = \Phi_J^+$ and since $t \notin J$ we have $\ell(tw_J) = \ell(w_J) + 1$ and  $w_J(\alpha_t) \notin \Phi_J^+$. Thus it follows from (iii) that $\mathscr{E}(tw_J)=\mathscr{E}(w_J)\cup\{w_J(\alpha_t)\}$, and therefore the elementary descent sets $\mathscr{E}(w_J)$ and $\mathscr{E}(tw_J)$ are distinct states of the automaton $\mathcal{A}_{BH}$. Hence it suffices to show that $T(tw_J) = T(w_J)$. If $v \in T(tw_J) $ then
\begin{equation*}
    \ell(tw_Jv) = \ell(tw_J) + \ell(v) = 1 + \ell(w_J) + \ell(v).
\end{equation*}
Hence $\ell(w_Jv) = \ell(w_J) + \ell(v)$ and thus $v \in T(w_J)$. To show the reverse inclusion, suppose that there is $v \in T(w_J) \setminus T(tw_J)$. Then we have $\ell(w_Jv) = \ell(w_J) + \ell(v)$ and 
\begin{equation*}
\ell(tw_Jv) < \ell(tw_J) + \ell(v) = 1 + \ell(w_J) + \ell(v) = 1 + \ell(w_Jv). 
\end{equation*}
Thus $\ell(tw_Jv) = \ell(w_Jv) - 1$ and therefore $t \in D(w_Jv)$. Since $w_J$ is the longest element of $W_J$ this implies that $J \cup \{ t \} \subseteq D(w_Jv)$ which is a contradiction with (ii) since left descent sets generate finite subgroups (see \cite[Corollary 2.18]{buildings}).
\end{proof}

\begin{proof}[Proof of \Cref{Theorem 1}]

(1) $\implies$ (2). We prove the contrapositive. Suppose that $\Gamma$ contains a subgraph $\Gamma'$ found in $\mathscr{X}$. Let $S'$ denote the vertices of $\Gamma'$. Our strategy is to choose $J \subset S'$ and $t \in S'$ such that conditions (i)-(iii) of \Cref{key_lemma} hold. 
\par
Suppose that $\Gamma'$ is the Coxeter graph of an irreducible, affine Weyl group (other than type $\tilde{A}_n$), and let $t$ be the unique node of $\Gamma'$ to which $s_0$ is connected in the standard numbering (see \cite{Bourbaki}). Let $\Phi_0 = \Phi_{S' \setminus \{s _0\}}$ and $W_0 = W_{S' \setminus \{s_0\}}$ with $w_0$ the longest element of $W_0$. After rescaling vectors we will assume that $\Phi_0$ is crystallographic, as outlined in \Cref{affine_root_system}. We let $J = S' \setminus \{t\}$. Since $\Gamma'$ is affine, it is clear that $|W_J| < \infty$ and $|W_{J \cup \{t\}}| = \infty$. We show that $w_J(\alpha_t) \in \mathscr{E}$.
\par
Let $J'=S'\backslash\{s_0,t\}$. Then $w_J=s_0w_{J'}$. We claim that $w_{J'}=w_0s_{\varphi}$. We prove the claim by showing that $\Phi(w_{J'})=\Phi_{J'}^+=\Phi(w_0s_{\varphi})$. The first equality is obvious. For the second equality, note that if $\alpha\in \Phi_{J'}^+$ then since $\langle \alpha,\varphi^{\vee}\rangle=0$ for all $\alpha\in J'$ we have $w_0s_{\varphi}(\alpha)=w_0(\alpha)\in -\Phi^+$, and so $\Phi_{J'}^+\subseteq \Phi(w_0s_{\varphi})$. Moreover, if $\beta\in \Phi_0^+\backslash\Phi_{J'}^+$ then $\beta=a\alpha_t+\gamma$ with $a\geq 1$ and $\gamma\in\Phi_{J'}^+$. Since $\langle\alpha_t,\varphi^{\vee}\rangle=1$ we compute $s_{\varphi}(\beta)=\beta-a\varphi\in -\Phi_0^+$ (since $\varphi$ is the highest root of $\Phi_0$) and hence $w_0s_{\varphi}(\beta)\in\Phi^+$. Therefore $\beta\notin\Phi(w_0s_{\varphi})$, and so $\Phi(w_0s_{\varphi})=\Phi_{J'}^+$ which completes the proof of the claim.
\par
Thus $w_J=s_0w_0s_{\varphi}$, and so
$$
w_J(\alpha_t)=s_0w_0s_{\varphi}(\alpha_t)=s_0w_0(\alpha_t-\varphi)=s_0(-\alpha_t+\varphi),
$$
where we have used the easily verified facts that $w_0\alpha_t=-\alpha_t$ and $w_0\varphi=-\varphi$. Since $s_0=s_{-\varphi+\delta}$ we compute $w_J(\alpha_t)=-\alpha_t+\delta$, 
and thus $w_J(\alpha_t)\in\mathscr{E}$. Hence the conditions of \Cref{key_lemma} hold and this completes the argument for the case of Coxeter graphs with a subgraph of affine type (other than type $\tilde{A}_n$).
%\\
%\par

We now consider the graphs $\Gamma'$ of the compact hyperbolic Coxeter groups which do not contain a circuit. Hence, for any choice of $t$, $W_J$ is finite for $J = S \setminus \{ t \}$ and $W_{J \cup \{ t \} }$ is infinite. Therefore by \Cref{key_lemma} we just need to show that there is a choice of $t$ such that $w_J(\alpha_t)$ is elementary. To do this we need to show that $w_J(\alpha_t)$ does not dominate any $\alpha\in\Phi_J^+$ and by \cite[Proposition 2.2(i)]{Brink} it suffices to show that $\langle w_J(\alpha_t), \alpha \rangle < 1$ for all $\alpha \in \Phi_J^+$. Since the bilinear form is $W$-invariant and $w_J(\Phi_J^+)=-\Phi_J^+$, this is equivalent to showing that $\langle \alpha_t,\alpha\rangle>-1$ for all $\alpha\in\Phi_J^+$. 
\par
Consider the graphs $X_3(a,b)$ with $a,b<\infty$ and $\frac{1}{a}+\frac{1}{b}<\frac{1}{2}$. Label the vertices $s_1, s_2, s_3$ reading from left to right and let $t =s_2$ and $J=\{s_1,s_3\}$. Thus \mbox{$\langle \alpha_t,\alpha\rangle>-1$} for all $\alpha\in\Phi_J^+=\{\alpha_1,\alpha_3\}$, and hence the result for these subgraphs.
\par
Now consider the graph $X_5(5)$. Label the vertices $s_1,\ldots,s_5$ reading left to right and choose $t = s_3$ and $J=\{s_1,s_2,s_4,s_5\}$. We have $\Phi_{J}^+ = \Phi_{\{s_1, s_2\}}^+ \sqcup \Phi_{\{s_4, s_5\}}^+$. Since $
\Phi_{\{s_1, s_2\}}^+ = \{ \alpha_1, \alpha_2, \alpha_1 + \zeta\alpha_2, \alpha_2 + \zeta\alpha_1, \zeta\alpha_1 + \zeta\alpha_2 \}$ where $\zeta = 2\cos\frac{\pi}{5}$
we have that
$
\langle \alpha_t, \alpha \rangle \in \{ \frac{1}{2}, 0, \frac{\zeta}{2} \}$ for all $\alpha \in \Phi_J^+$. Hence the result for this graph. All other cases are similar.
\\
\par
(2) $\implies$ (3). Let $\Gamma$ be a Coxeter graph which does not have a subgraph in~$\mathscr{X}$. As in \cite{Brink}, for $J \subseteq S$, let $\mathscr{E}_J = \{ \alpha \in \mathscr{E} \mid J(\alpha) = J \}$. Then $\mathscr{E}$ is the disjoint union of all $\mathscr{E}_J$ such that $\Gamma(J)$ is connected.
\par
Suppose that $\mathscr{E} \neq \Phi_{\mathrm{sph}}^+$. Then there exists $J \subset S$ such that $|W_J| = \infty$ and $\mathscr{E}_J \neq \emptyset$. By \Cref{no_circuits_no_infinitebonds}, the graph $\Gamma(J)$ cannot contain a circuit or an infinite bond. Hence $\Gamma(J)$ must be a tree with no infinite bonds. Since $\Gamma$ does not have any subgraph in $\mathscr{X}$, it follows that $\Gamma(J)$ does not have a subgraph in $\mathscr{X}$. \par
Let $m$ denote the maximal edge label in $\Gamma(J)$. Since $\Gamma(J)$ contains no subgraph of type $X_3(a,b)$ or $\tilde{G}_2$ we have $m<6$ (note that $|J|\geq 3$ as $W_J$ is infinite). 

Suppose that $m=5$, and let $e=\{s,t\}$ be an edge of $\Gamma(J)$ with edge label~$5$. Suppose that there is another edge $f=\{s',t'\}\neq e$ of $\Gamma(J)$ with edge label~$4$~or~$5$. Let $d$ be the distance between $e$ and $f$ (measured in the edge graph). The nonexistence of $X_3(4,5)$, $X_3(5,5)$, $X_j(4)$ and $X_j(5)$ subgraphs with $j=4,5$ forces $d> 3$, however then the nonexistence of $X_5(3)$ subgraphs gives a contradiction. So there is a unique bond with edge label $5$. Since $\Gamma(J)$ contains no $Y_4$, $X_5(3)$, $Z_4$ or~$Z_5$ graphs it follows that $\Gamma(J)$ is of type $H_3$ or $H_4$, a contradiction. 

Suppose that $m=4$. The nonexistence of $\tilde{C}_n$ subgraphs forces there to be a unique edge with label $4$. Since $\Gamma(J)$ contains no $\tilde{B}_n$ subgraphs the tree $\Gamma(J)$ has no branch points. Since $\Gamma(J)$ contains no $\tilde{F}_4$ subgraphs we see that $\Gamma(J)$ is of type $F_4$ or $B_n$, a contradiction.

Thus $m=3$. Since $\Gamma(J)$ contains no $\tilde{D}_4$ subgraph every vertex of $\Gamma(J)$ has degree at most $3$. Since $\Gamma(J)$ contains no $\tilde{D}_n$ subgraph with $n\geq 5$ there is at most one branch point. Suppose there is a branch point $x$. Thus $x$ is the unique vertex of $\Gamma(J)$ of degree $3$. Let $ p_i $ for $i \in \{0,1,2\}$ denote the three paths of $\Gamma(J)$ out of the vertex $x$, so that $\Gamma(J) = \cup_{i \in \{0,1,2\}} p_i$ and $\cap_{i \in \{0,1,2\}} p_i = \{x\}$. For $i\in\{0,1,2\}$ let $a_i$ denote the length of the path $p_i$, and without loss of generality we may suppose that $a_0\leq a_1\leq a_2$. The nonexistence of $\tilde{E}_6$ subgraphs implies that $a_0 = 1$. The nonexistence of $\tilde{E}_7$ subgraphs implies that $a_1\in\{1,2\}$. However if $a_1=1$ then $\Gamma(J)$ is of type $D_n$, and if $a_1=2$ then $\Gamma(J)$ is either of type $E_6$, $E_7$, or $E_8$, or it contains an $\tilde{E}_8$ subgraph, a contradiction. Thus $\Gamma(J)$ has no branch point and therefore must be of type $A_n$, a contradiction.
\\
\par
(3) $\implies$ (1). This implication was proven in \cite[Proposition 3.14]{Hohlweg}. For completeness, we repeat the argument here.
\par
Suppose that $T(w)=T(v)$. We are required to show that $\mathscr{E}(w)=\mathscr{E}(v)$. Suppose, for a contradiction, that there is $\alpha\in\mathscr{E}(w)\backslash\mathscr{E}(v)$, and let $J=J(\alpha)$. Let $w_1$ (respectively $v_1$) be the unique minimal length representative of the coset $wW_J$ (respectively $vW_J$). Then $w=w_1w_2$ and $v=v_1v_2$ for some $w_2,v_2\in W_J$, and moreover $\ell(w_1u)=\ell(w_1)+\ell(u)$ and $\ell(v_1u)=\ell(v_1)+\ell(u)$ for all $u\in W_J$ (see \cite[Proposition~2.20]{buildings}).

Let $z\in W_J$. Since $T(w)=T(v)$ we have $\ell(wz)=\ell(w)+\ell(z)$ if and only if $\ell(vz)=\ell(v)+\ell(z)$, and using the decompositions $w=w_1w_2$ and $v=v_1v_2$ it follows that $\ell(w_2z)=\ell(w_2)+\ell(z)$ if and only if $\ell(v_2z)=\ell(v_2)+\ell(z)$. Thus $T_J(w_2)=T_J(v_2)$, where we write $T_J(u)$ for the cone type in the group $W_J$ of an element $u\in W_J$. However since $\mathscr{E}=\Phi_{\mathrm{sph}}^+$ we have that $W_J$ is a finite group, and thus the associated Brink-Howlett automaton for this group is necessarily minimal (to see this, note firstly that the Brink-Howlett automaton has $|W_J|$ states because every positive root of a finite Coxeter group is elementary and the set of inversion sets is in bijection with elements of $W_J$, and secondly that the minimal automaton also has $|W_J|$ states because distinct elements of a finite Coxeter group have distinct cone types). Thus $\mathscr{E}(w_2)=\mathscr{E}(v_2)$. Now, by [13, Corollary~2.13] we have $\Phi(w_2)=\Phi(w)\cap \Phi_J^+$, and hence $\mathscr{E}(w_2)=\mathscr{E}(w)\cap \Phi_J^+$. Thus $\alpha\in \mathscr{E}(w_2)$. However since we also have $\mathscr{E}(v_2)=\mathscr{E}(v)\cap \Phi_J^+$ and $\mathscr{E}(w_2)=\mathscr{E}(v_2)$ we conclude that $\alpha\in \mathscr{E}(v)$, a contradiction. Thus $\mathscr{E}(w)=\mathscr{E}(v)$ as required.
\end{proof}

\begin{remark}
If $|S|=\infty$ then of course there is no finite state automaton recognising the language of reduced words of $(W,S)$. 
However in this setting the statement of Theorem~1, and its proof,  remain valid if one interprets ``minimal'' to mean that the map $\theta$ in \Cref{myhill_nerode} is bijective.
\end{remark}

%%%%%%%%%%%%%%%%%%%%%%%%%%%%%%%%%%%%%%%%%%%%%%%%%%%%%%%%%%%%%%%%%%%%%%%%
%\footnote{Here is an example of a footnote.}%
%%%%%%%%%%%%%%%%%%%%%%%%%%%%%%%%%%%%%%%%%%%%%%%%%%%%%%%%%%%%%%%%%%%%%%%%

\bibliographystyle{amsplain}

\end{document}